\documentclass[11pt]{article}

\usepackage[utf8]{inputenc} 
\usepackage{amsmath}
\usepackage{graphicx}
\usepackage{subcaption}
\usepackage[top=30mm, bottom=30mm, left=27mm, right=27mm]{geometry}
\usepackage{amsfonts}
\usepackage{mathtools}
\usepackage{subcaption}
\usepackage{amssymb}
\usepackage{amsthm}
\usepackage{chngpage}
\usepackage{hyperref}
\hypersetup{
	colorlinks=true,
	linkcolor=black,
	citecolor=black,
	filecolor=black,      
	urlcolor=black,
}
\usepackage{enumerate}
\usepackage{makeidx}
\usepackage{bm}
\usepackage{thmtools}
\usepackage{thm-restate}

\makeindex

\makeatletter
\renewenvironment{proof}[1][\proofname] {\par\pushQED{\qed}\normalfont\topsep6\p@\@plus6\p@\relax\trivlist\item[\hskip\labelsep\bfseries#1\@addpunct{.}]\ignorespaces}{\popQED\endtrivlist\@endpefalse}
\makeatother

\newcommand{\PP}{\mathcal{P}}
\renewcommand{\AA}{\mathcal{A}}
\newcommand{\BB}{\mathcal{B}}

\newtheorem{proposition}{Proposition}
\newtheorem{lemma}[proposition]{Lemma}
\newtheorem{theorem}[proposition]{Theorem}

\usepackage{amsthm}

\theoremstyle{definition}

\newtheorem*{remark*}{Remark}
\newtheorem*{theorem*}{Theorem}

\mathcode`l="8000
\begingroup
\makeatletter
\lccode`\~=`\l
\DeclareMathSymbol{\lsb@l}{\mathalpha}{letters}{`l}
\lowercase{\gdef~{\ifnum\the\mathgroup=\m@ne \ell \else \lsb@l \fi}}%
\endgroup

\title{A note on the orientation covering number}
\author{Barnabás Janzer\thanks{Department of Pure Mathematics and Mathematical Statistics, University of Cambridge, Wilberforce Road, Cambridge CB3 0WB, United Kingdom. Email: bkj21@cam.ac.uk. This work was supported by EPSRC DTG.}}
\date{\vspace{-21pt}}

\begin{document}
	\maketitle
	
\begin{abstract}
Given a graph $G$, its orientation covering number $\sigma(G)$ is the smallest non-negative integer $k$ with the property that we can choose $k$ orientations of $G$ such that whenever $x, y, z$ are vertices of $G$ with $xy,xz\in E(G)$ then there is a chosen orientation in which both $xy$ and $xz$ are oriented away from $x$. Esperet, Gimbel and King showed that $\sigma(G)\leq \sigma\left(K_{\chi(G)}\right)$, where $\chi(G)$ is the chromatic number of $G$, and asked whether we always have equality. In this note we prove that it is indeed always the case that $\sigma(G)=\sigma(K_{\chi(G)})$. We also determine the exact value of $\sigma(K_n)$ explicitly for `most' values of $n$.
\end{abstract}

\section{Introduction}

Given a non-empty graph $G$ and $k$ orientations $\vec{G}_1,\dots,\vec{G}_k$ of $G$, we say that $\vec{G}_1,\dots,\vec{G}_k$ is an \textit{orientation covering} of $G$ if whenever $x,y,z\in V(G)$ with $xy,xz\in E(G)$ then there is an orientation in which both $xy$ and $xz$ are oriented away from $x$ (i.e., there is some $i$ such that $(x,y),(x,z)\in E(\vec{G}_i)$). The \textit{orientation covering number} $\sigma(G)$ of $G$ is the smallest positive integer $k$ such that there is a list of $k$ orientations forming an orientation covering of $G$. Orientation coverings were introduced by Esperet, Gimbel and King \cite{esperet2010covering}, who used them to study the minimal number of equivalence subgraphs needed to cover a given graph.

Esperet, Gimbel and King \cite{esperet2010covering} showed that $\sigma(G)\leq \sigma\left(K_{\chi(G)}\right)$ for any graph $G$, where $\chi$ denotes the chromatic number. They asked whether we always have $\sigma(G)=\sigma\left(K_{\chi(G)}\right)$. In this note we answer this question in the positive.

\begin{theorem}\label{thm_orcovequal}
	For any non-empty graph $G$, we have $\sigma(G)=\sigma\left(K_{\chi(G)}\right)$.
\end{theorem}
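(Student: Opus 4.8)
The plan is to establish the reverse inequality $\sigma(K_{\chi(G)})\le\sigma(G)$, since the bound $\sigma(G)\le\sigma(K_{\chi(G)})$ is already available. Throughout I will use the following bookkeeping for an orientation covering $\vec G_1,\dots,\vec G_k$: writing $[k]=\{1,\dots,k\}$, record for each arc the set $f(u,v)=\{i: uv \text{ is oriented from } u \text{ to } v \text{ in } \vec G_i\}\subseteq[k]$. Then $f(u,v)=[k]\setminus f(v,u)$ for every edge, and the covering condition says exactly that for each vertex $x$ the sets $\{f(x,y): y\in N(x)\}$ are pairwise intersecting. I will also use the immediate monotonicity $\sigma(H)\le\sigma(G)$ for subgraphs $H\subseteq G$, obtained by restricting each orientation to $H$ and noting that every cherry of $H$ is a cherry of $G$.

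First I would set up the extremal object on the complete-graph side. Call a family $\mathcal U$ of nonempty subsets of $[k]$ a \emph{maximal intersecting family} if it is intersecting and maximal as such; a standard argument shows any such $\mathcal U$ is upward closed and contains exactly one of $S,[k]\setminus S$ for every $S\subseteq[k]$. The key construction is an orientation covering, using $k$ orientations, of the complete graph whose vertices are all maximal intersecting families on $[k]$. For distinct $\mathcal U\ne\mathcal U'$ the exactly-one-of-each-pair property yields some $S\in\mathcal U\setminus\mathcal U'$, and then $[k]\setminus S\in\mathcal U'$; I orient the edge $\{\mathcal U,\mathcal U'\}$ so that $\mathcal U$ beats $\mathcal U'$ in precisely the orientations indexed by $S$. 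Since the set of orientations in which $\mathcal U$ beats any prescribed neighbour is then a member of $\mathcal U$, and $\mathcal U$ is intersecting, these sets pairwise intersect and the covering condition at $\mathcal U$ holds. This shows $\sigma(K_{\lambda_k})\le k$, where $\lambda_k$ is the number of maximal intersecting families on $[k]$.

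Next I would convert a covering of $G$ into a colouring. Given an orientation covering of $G$ with $k=\sigma(G)$ orientations, for each vertex $v$ of degree at least two the family $\mathcal F_v=\{f(v,u): u\in N(v)\}$ is intersecting, so I extend it to a maximal intersecting family $\mathcal U_v$ and colour $v$ by $\mathcal U_v$ (degree-$\le 1$ vertices are handled in the last paragraph). For an edge $uv$ we have $f(u,v)\in\mathcal U_u$ while its complement $f(v,u)\in\mathcal U_v$; if $\mathcal U_u=\mathcal U_v$ then this single family would contain both a set and its complement, contradicting that it is intersecting. Hence adjacent vertices receive distinct colours, so $\chi(G)\le|C|$, where $C$ is the set of colours used.

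Finally I would combine these. As $C$ is a set of maximal intersecting families, the construction above restricted to $C$ gives an orientation covering of $K_{|C|}\cong K_C$ with $k$ orientations, so $\sigma(K_{|C|})\le k$; since $\chi(G)\le|C|$, subgraph monotonicity yields $\sigma(K_{\chi(G)})\le\sigma(K_{|C|})\le k=\sigma(G)$, as required. The crux to get right is the construction in the second paragraph: recognising maximal intersecting families as the correct ``colours'' and verifying the covering property via the incomparability of distinct such families. A secondary nuisance is the degenerate vertices: if $\deg(v)\le 1$ then $\mathcal F_v$ may be empty or equal to $\{\emptyset\}$, in which case I would instead assign $v$ any maximal intersecting family distinct from that of its (at most one) neighbour, which is possible as soon as $k\ge2$; the residual cases $\sigma(G)\in\{0,1\}$ force $G$ to have maximum degree at most one and are checked directly.
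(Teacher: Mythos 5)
Your proposal is correct and takes essentially the same approach as the paper: your two constructions --- extending each vertex's family of ``beat sets'' $\{f(v,u):u\in N(v)\}$ to a maximal intersecting family to obtain a proper colouring, and conversely orienting the complete graph on a set of maximal intersecting families using the exactly-one-of-each-complementary-pair property --- are precisely the content of the paper's Lemma~\ref{lemma_altcharacterisation} and Theorem~\ref{theorem_orientationcover}, which characterise $\sigma(G)$ as the least $k$ admitting at least $\chi(G)$ maximal intersecting families over $[k]$. The only differences are cosmetic: you invoke the Esperet--Gimbel--King bound $\sigma(G)\le\sigma(K_{\chi(G)})$ for one direction rather than re-deriving it from the characterisation, and your degree-at-most-one caveat is actually vacuous under the paper's definition, since the covering condition with $z=y$ forces every set $f(v,u)$ to be non-empty (and indeed forces $\sigma(G)\ge 2$ for every non-empty graph), so $\mathcal{F}_v$ is intersecting at every vertex.
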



The value of $\sigma(K_n)$ has been investigated by Esperet, Gimbel and King \cite{esperet2010covering}, who determined its order of magnitude and the exact values for small values of $n$. An observation of Gyárfás (see \cite{esperet2010covering}) shows that we have $\chi(DS_n)\leq \sigma(K_n)\leq \chi(DS_n)+2$, where $DS_n$ is the double-shift graph on $n$ vertices. Using the results of Füredi, Hajnal, Rödl and Trotter \cite{furedi1992interval} on the chromatic number of $DS_n$, this gives $\sigma(K_n)=\log\log n+\frac{1}{2}\log\log\log n+O(1)$. (All logarithms in this paper are base $2$.) In this note we will also determine the value of $\sigma(K_n)$ exactly in terms of a certain sequence of positive integers sometimes called the Ho\c{s}ten--Morris numbers. As a corollary, we get the following improved estimate.

\begin{theorem}\label{thm_orcovestimate}
	We have $\sigma(K_n)=\lceil\log\log n+\frac{1}{2}\log\log\log n+\frac{1}{2}(\log \pi+1)+o(1)\rceil$ as $n\to \infty$.
\end{theorem}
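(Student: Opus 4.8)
The plan is to first replace the quantity $\sigma(K_n)$ by an exact combinatorial expression, and then extract the claimed asymptotics from a precise count. Fix $k$ and suppose $\vec{K}_1,\dots,\vec{K}_k$ is an orientation covering of $K_n$. For each ordered pair of distinct vertices $x,y$ record the set $S(x,y)=\{i:(x,y)\in E(\vec{K}_i)\}\subseteq[k]$ of orientations in which $xy$ points away from $x$. Since each $\vec{K}_i$ is an orientation we have $S(y,x)=[k]\setminus S(x,y)$, and the covering condition at $x$ says precisely that the family $\mathcal{F}_x=\{S(x,y):y\neq x\}$ is \emph{intersecting} (any two members meet). Thus an orientation covering with $k$ orientations is exactly an antisymmetric assignment $(x,y)\mapsto S(x,y)$ of subsets of $[k]$ whose out-families are intersecting, and $\sigma(K_n)$ is the least $k$ for which such an assignment exists on $n$ vertices.

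I would then show that this least $k$ is governed by the number $\mathrm{HM}_k$ of maximal intersecting families of subsets of $[k]$. Extend each $\mathcal{F}_x$ to a maximal intersecting family $\mathcal{G}_x$. A maximal intersecting family is up-closed and contains exactly one of each complementary pair $\{A,[k]\setminus A\}$, so it has size $2^{k-1}$ and is the up-set of a monotone self-dual Boolean function; in particular two distinct maximal intersecting families are incomparable. If $x\neq y$ then $S(x,y)\in\mathcal{G}_x$ while $S(x,y)=[k]\setminus S(y,x)\notin\mathcal{G}_y$ (as $S(y,x)\in\mathcal{G}_y$), so $\mathcal{G}_x\neq\mathcal{G}_y$ and hence $n\le \mathrm{HM}_k$. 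Conversely, given $n$ distinct maximal intersecting families $\mathcal{G}_1,\dots,\mathcal{G}_n$, incomparability lets us pick for each pair $x<y$ a set $S(x,y)\in\mathcal{G}_x\setminus\mathcal{G}_y$; then $S(y,x)=[k]\setminus S(x,y)\in\mathcal{G}_y$ and each out-family lies inside some $\mathcal{G}_x$, hence is intersecting. This proves the exact formula $\sigma(K_n)=\min\{k:\mathrm{HM}_k\ge n\}$, and identifies $\mathrm{HM}_k$ (the number of monotone self-dual Boolean functions on $k$ variables) as the Ho\c{s}ten--Morris numbers.

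With the exact formula in hand, Theorem~\ref{thm_orcovestimate} becomes an inversion problem. The key analytic input is the asymptotics of $\mathrm{HM}_k$: I would use the (Korshunov-type) estimate $\log \mathrm{HM}_k=\binom{k-1}{\lfloor (k-1)/2\rfloor}(1+o(1))$ for the number of self-dual monotone functions, and apply Stirling's approximation $\binom{k-1}{\lfloor(k-1)/2\rfloor}=2^{k-1}\sqrt{2/(\pi(k-1))}\,(1+o(1))$ to obtain $\log\log \mathrm{HM}_k=k-\tfrac12\log k-\tfrac12(\log\pi+1)+o(1)$. Writing $F(k)$ for the right-hand side and inverting the increasing function $F$ gives $F^{-1}(y)=y+\tfrac12\log y+\tfrac12(\log\pi+1)+o(1)$; since $\sigma(K_n)=\min\{k:F(k)+o(1)\ge \log\log n\}$, substituting $y=\log\log n$ yields $\sigma(K_n)=\lceil \log\log n+\tfrac12\log\log\log n+\tfrac12(\log\pi+1)+o(1)\rceil$, as required.

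I expect the main obstacle to be the asymptotics of $\mathrm{HM}_k$ at the required precision. The reduction to $\min\{k:\mathrm{HM}_k\ge n\}$ is clean and the Stirling step is routine, but controlling $\log \mathrm{HM}_k$ well enough is delicate: I need the multiplicative error in $\log\mathrm{HM}_k$ to be $o(1)$, i.e. an additive error $o\!\left(\binom{k-1}{\lfloor(k-1)/2\rfloor}\right)$, so that it survives only as $o(1)$ after the second logarithm. This is exactly where the precise enumeration of self-dual monotone Boolean functions is needed, and where the constant $\tfrac12\log\pi$ originates, as the Stirling correction to the central binomial coefficient. A secondary point requiring care is the behaviour at the thresholds $n=\mathrm{HM}_{k}$, where one must check that the $o(1)$ term and the ceiling combine to give exactly the jump of $\sigma(K_n)$ from $k$ to $k+1$.
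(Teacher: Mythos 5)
Your proposal is correct and takes essentially the same route as the paper: your exact formula $\sigma(K_n)=\min\{k:\mathrm{HM}_k\ge n\}$ is precisely Theorem~\ref{theorem_orientationcover} specialised to $K_n$ (proved by the same device of extending the out-set families to maximal intersecting families and using incomparability), and your analytic input $\log \mathrm{HM}_k=\binom{k-1}{\lfloor (k-1)/2\rfloor}(1+o(1))$ is equivalent, via Stirling, to the estimate $\log\lambda(k)\sim 2^k/\sqrt{2\pi k}$ of Brouwer, Mills, Mills and Verbeek that the paper invokes. The final double-logarithm inversion yielding the ceiling expression is the same computation as in the paper.
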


Given a positive integer $k$, let $[k]$ denote $\{1,\dots,k\}$, as usual.
Given a family $\AA\subseteq \PP([k])$ of subsets of $[k]$, we say that $\AA$ is \textit{intersecting} if whenever $S,T\in \AA$ then $S\cap T\not =\emptyset$. We say that $\AA$ is \textit{maximal intersecting} if $\AA$ is intersecting and whenever $\BB\supseteq\AA$ and $\BB$ is intersecting then $\BB=\AA$. (Equivalently, if $\AA$ is intersecting and $|\AA|=2^{k-1}$.) The following characterisation of $\sigma(G)$ is the key to our results.

\begin{theorem}\label{theorem_orientationcover}
	For any non-empty graph $G$, $\sigma(G)$ is the smallest positive integer $k$ such that there are at least $\chi(G)$ maximal intersecting families over $[k]$.
\end{theorem}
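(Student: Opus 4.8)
The plan is to set up a dictionary between orientation coverings of $G$ and assignments of intersecting families to the vertices of $G$, and to read off both inequalities from this dictionary. Given $k$ orientations $\vec{G}_1,\dots,\vec{G}_k$, for each vertex $x$ and each neighbour $y$ I would record the set $T_x(y)=\{i\in[k]:(x,y)\in E(\vec{G}_i)\}$ of orientations in which the edge $xy$ points out of $x$. Two observations drive everything. First, the covering condition at $x$ says exactly that for all neighbours $y,z$ of $x$ (allowing $y=z$) there is some $i\in T_x(y)\cap T_x(z)$, which is precisely the statement that the family $\mathcal{F}_x=\{T_x(y):y\in N(x)\}$ is intersecting; the degenerate case $y=z$ forces each $T_x(y)$ to be non-empty, consistent with membership in an intersecting family. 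Second, since each edge $xy$ is oriented in exactly one direction in each $\vec{G}_i$, we have $T_y(x)=[k]\setminus T_x(y)$, so along an edge the two recorded sets are complementary.

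For the lower bound, suppose $\sigma(G)=k$ and fix an optimal covering. Each $\mathcal{F}_x$ is intersecting, so I would extend it (arbitrarily) to a maximal intersecting family $\mathcal{M}_x$ over $[k]$. For any edge $xy$ we have $T_x(y)\in\mathcal{M}_x$ while its complement $T_y(x)\in\mathcal{M}_y$; since no intersecting family can contain both a set and its complement, this forces $\mathcal{M}_x\neq\mathcal{M}_y$. Hence $x\mapsto\mathcal{M}_x$ is a proper colouring of $G$ whose colours are maximal intersecting families over $[k]$, giving $\chi(G)\leq(\text{number of maximal intersecting families over }[k])$. Therefore the smallest $k$ for which there are at least $\chi(G)$ such families is at most $\sigma(G)$.

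For the upper bound I would reverse this. Let $k$ be the smallest integer with at least $\chi(G)$ maximal intersecting families over $[k]$. Properly colour $G$ with $\chi(G)$ colours and assign distinct maximal intersecting families $\mathcal{M}_x$ to distinct colours, so that $\mathcal{M}_x\neq\mathcal{M}_y$ whenever $xy\in E(G)$. For each edge $xy$, pick a set $S$ lying in $\mathcal{M}_x$ but not in $\mathcal{M}_y$; by maximality $\mathcal{M}_y$ contains exactly one of $S,\overline{S}$, so $\overline{S}\in\mathcal{M}_y$. I then orient $xy$ out of $x$ in exactly the orientations indexed by $S$, which sets $T_x(y)=S\in\mathcal{M}_x$ and $T_y(x)=\overline{S}\in\mathcal{M}_y$. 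Now every $T_x(y)$ with $y\in N(x)$ lies in the single intersecting family $\mathcal{M}_x$, so $\mathcal{F}_x\subseteq\mathcal{M}_x$ is intersecting and the covering condition holds at every vertex; thus $\sigma(G)\leq k$, completing the equality.

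The substance of the argument is the translation in the first paragraph, and the one step to get exactly right is that the covering requirement—including the case $y=z$ of a single edge—corresponds precisely to the intersecting property, with the orientation giving complementarity across each edge. Once this is in place the two directions are essentially dual: the complement obstruction forces adjacent vertices to receive distinct maximal families, and conversely any two distinct maximal intersecting families can always be separated by a complementary pair, which is exactly what lets a given colouring by such families be realised by a genuine orientation.
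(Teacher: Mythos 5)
Your proof is correct and takes essentially the same route as the paper: your ``dictionary'' is exactly the paper's Lemma~\ref{lemma_altcharacterisation} (encoding each vertex by the intersecting family of out-sets $T_x(y)$), and both directions---extending each family to a maximal intersecting one to obtain a proper colouring for the lower bound, and realising a colouring by distinct maximal intersecting families as an orientation covering for the upper bound---mirror the paper's argument. The only cosmetic difference is in the upper bound, where you invoke the standard fact that a maximal intersecting family contains exactly one of each complementary pair $\{S,\overline{S}\}$, while the paper instead picks disjoint (not necessarily complementary) sets from the two families and orients the remaining coordinates of each edge arbitrarily.
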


Clearly, Theorem~\ref{theorem_orientationcover} implies Theorem~\ref{thm_orcovequal}. Let $\lambda(k)$ denote the number of maximal intersecting families over $[k]$. The numbers $\lambda(k)$ are sometimes called Ho\c{s}ten--Morris numbers, after a paper of Ho\c{s}ten and Morris \cite{hocsten1999order} in which they showed that the order dimension of $K_n$ is the smallest positive integer $k$ with $\lambda(k)\geq n$. An equivalent formulation of their result is that the minimal number of linear orders on $[n]$ with the property that the induced orientations of $K_n$ form an orientation covering is the smallest positive integer $k$ with $\lambda(k)\geq n$. Note that by Theorem \ref{theorem_orientationcover} this number is the same as the orientation covering number of $K_n$.

Although no exact or asymptotic formula is known for $\lambda(k)$, it was shown by Brouwer, Mills, Mills and Verbeek \cite{brouwer2013counting} that
\begin{equation}\label{eq_HMbound}
\log \lambda(k)\sim \frac{2^k}{\sqrt{2\pi k}}.
\end{equation}
Furthermore, the exact values of $\lambda(k)$ are known \cite{brouwer2013counting} for $k$ up to 9, with $\lambda(9)\approx 4\times 10^{20}$.

Theorem~\ref{thm_orcovestimate} follows from Theorem~\ref{theorem_orientationcover} and \eqref{eq_HMbound}. Indeed, taking logarithms in \eqref{eq_HMbound} shows that $\sigma(K_n)$ is the smallest positive integer $k$ with $\log\log n\leq k-\frac{1}{2}(\log\pi+1)-\frac{1}{2}\log k+o(1)$, which gives $\sigma(K_n)=\lceil\log\log n+\frac{1}{2}\log\log\log n+\frac{1}{2}(\log \pi+1)+o(1)\rceil$.

\section{Proof of Theorem \texorpdfstring{\ref{theorem_orientationcover}}{3}}

The proof is based on the following observation.

\begin{lemma}\label{lemma_altcharacterisation}
	For any non-empty graph $G$, $\sigma(G)$ is the smallest positive integer $k$ with the property that there is a collection $(\AA_v)_{v\in V(G)}$ of subsets of $\PP([k])$ (i.e., $\AA_v\subseteq \PP([k])$ for all $v$) such that the following two conditions hold.
	\begin{enumerate}
		\item If $uv\in E(G)$, then there exists $S\in \AA_u$ and $T\in \AA_v$ such that $S\cap T=\emptyset$.
		\item For all $v\in V(G)$ and $S,T\in \AA_v$, we have $S\cap T\not =\emptyset$. (I.e., $\AA_v$ is intersecting.)
	\end{enumerate}
\end{lemma}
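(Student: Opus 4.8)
The plan is to prove the lemma by exhibiting, for each fixed positive integer $k$, an equivalence between the existence of an orientation covering of $G$ using $k$ orientations and the existence of a collection $(\AA_v)_{v\in V(G)}$ of subsets of $\PP([k])$ satisfying conditions (1) and (2). Since these two properties will then hold for exactly the same set of values of $k$, their least elements coincide; as the least such $k$ on the covering side is by definition $\sigma(G)$, this gives the claim.

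For the forward implication, I would start from an orientation covering $\vec{G}_1,\dots,\vec{G}_k$ and, for each vertex $v$ and neighbour $w$, record the set $S_v(w)=\{i\in[k]:(v,w)\in E(\vec{G}_i)\}$ of orientations directing the edge $vw$ away from $v$. Setting $\AA_v=\{S_v(w):vw\in E(G)\}$, condition (2) becomes a direct translation of the covering property: for neighbours $w,w'$ of $v$ the covering condition applied at $x=v$ yields an orientation sending both $vw$ and $vw'$ away from $v$, i.e. $S_v(w)\cap S_v(w')\neq\emptyset$ (the degenerate case $w=w'$ recording that each $S_v(w)$ is non-empty, as it must be for an intersecting family). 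Condition (1) is immediate from the fact that, along any edge $uv$, the sets $S_u(v)$ and $S_v(u)$ are complementary in $[k]$, since each orientation directs $uv$ exactly one way; hence they are disjoint members of $\AA_u$ and $\AA_v$ respectively.

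For the converse I would fix an arbitrary ordering $uv$ of the endpoints of each edge, use condition (1) to pick $S=S(u,v)\in\AA_u$ and $T=T(u,v)\in\AA_v$ with $S\cap T=\emptyset$, and then define the $k$ orientations by directing $uv$ from $u$ to $v$ in $\vec{G}_i$ exactly when $i\in S(u,v)$. To verify that these form an orientation covering I would observe that at every vertex $v$ and every incident edge the away-set (the set of orientations directing that edge away from $v$) contains a \emph{witness} belonging to $\AA_v$: this witness is $S(v,\cdot)$ itself when $v$ is the chosen first endpoint, and is $T(\cdot,v)$ when $v$ is the second endpoint, the required containment in the latter case being exactly the disjointness $S\cap T=\emptyset$. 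Because $\AA_v$ is intersecting, any two such witnesses meet, so the two away-sets containing them also meet; this is the covering condition at $v$ (with the case $w=w'$ again covered since each witness is a non-empty member of $\AA_v$).

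I expect the one genuine point, rather than a routine calculation, to be the asymmetry in condition (1): it supplies an exact member of $\AA_u$ but only guarantees that the complementary away-set at $v$ \emph{contains} some member of $\AA_v$, not that it equals one. The thing to get right is that this suffices, because the covering property only asks for non-empty intersections and enlarging a set cannot destroy an intersection; so it is enough that the witnesses sitting inside the away-sets pairwise intersect, which is precisely what the intersecting condition (2) provides. Everything else is bookkeeping, and once this observation is in place both implications close and the equality of the two minima follows.
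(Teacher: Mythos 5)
Your proposal is correct and follows essentially the same route as the paper: in the forward direction you record the away-sets $S_v(w)=\{i:(v,w)\in E(\vec{G}_i)\}$ exactly as the paper does, and in the converse you pick disjoint witnesses $S\in\AA_u$, $T\in\AA_v$ per edge and orient accordingly, differing only in that you resolve the indices outside $S\cup T$ deterministically where the paper orients them arbitrarily. The ``genuine point'' you flag (the away-set at the second endpoint only \emph{contains} a member of $\AA_v$ rather than equalling one) is handled implicitly in the paper by the same observation you make, namely that enlarging sets cannot destroy the non-empty intersection supplied by Condition 2.
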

\begin{proof}
	First assume that $\sigma(G)=k$ and $\vec{G}_1,\dots,\vec{G}_k$ form an orientation cover of $G$. For each directed edge $(x,y)$ of $G$, let $S_{(x,y)}=\{i\in [k]: (x,y)\in E(\vec{G}_i)\}$. Let $\AA_v=\{S_{(v,w)}:vw\in E(G)\}$. Clearly $S_{(v,w)}\cap S_{(w,v)}=\emptyset$, so Condition 1 holds. Also, we have $S_{(v,w)}\cap S_{(v,w')}\not=\emptyset$ whenever $vw,vw'\in E(G)$, since by assumption there is an $i$ such that $(v,w),(v,w')\in E(\vec{G}_i)$. So Condition 2 holds as well.
	
	Conversely, suppose that we have such a collection $(\AA_v)_{v\in V(G)}$ with $\AA_v\subseteq \PP([k])$ for all $v$. For each $uv\in E(G)$, pick $S_{(u,v)}\in \AA_u$ and $S_{(v,u)}\in \AA_v$ such that $S_{(u,v)}\cap S_{(v,u)}=\emptyset$. Define the orientations $\vec{G}_1,\dots,\vec{G}_k$ of $G$ by orienting the edge $uv$ from $u$ to $v$ in $\vec{G}_i$ if $i\in S_{(u,v)}$, from $v$ to $u$ if $i\in S_{(v,u)}$, and arbitrarily otherwise. This is clearly well-defined, and whenever $uv,uw\in E(G)$, then $S_{(u,v)}\cap S_{(u,w)}\not =\emptyset$ (by Condition 2). This gives $\sigma(G)\leq k$, as claimed.
\end{proof}
\begin{proof}[Proof of Theorem \ref{theorem_orientationcover}]
We first show the lower bound for $\sigma(G)$. Let $G$ be any non-empty graph, and let $(\AA_v)_{v\in V(G)}$ be as in Lemma \ref{lemma_altcharacterisation} for $k=\sigma(G)$. For each $v\in V(G)$, let $\BB_v$ be a maximal intersecting family with $\BB_v\supseteq \AA_v$. Note that the families $(\BB_v)_{v\in V(G)}$ still satisfy both conditions in Lemma \ref{lemma_altcharacterisation}. Furthermore, $v\mapsto \BB_v$ is a proper vertex-colouring (since each $\BB_v$ is intersecting but $\BB_v\cup\BB_w$ is not whenever $vw\in E(G)$). It follows that the number of maximal intersecting families over $[k]$ is at least $\chi(G)$.

Conversely, assume that $k$ is a positive integer such that there are at least $\chi(G)$ distinct maximal intersecting families $\BB_1,\dots,\BB_k$ over $[k]$. Let $c:V(G)\mapsto [\chi(G)]$ be a proper vertex-colouring of $G$, and set $\AA_v=\BB_{c(v)}$ for each $v$. Certainly each $\AA_v$ is intersecting. Furthermore, by maximality, no $\AA_v\cup\AA_w$ can be intersecting when $c(v)\not=c(w)$, and hence $\AA_v\cup \AA_w$ is not intersecting when $vw\in E(G)$. It follows that $(\AA_v)_{v\in V(G)}$ satisfies both conditions in Lemma \ref{lemma_altcharacterisation} and so $\sigma(G)\leq k$. 
\end{proof}

\bibliography{Bibliography}
\bibliographystyle{abbrv}

\end{document}